\documentclass[12pt]{amsart}
\usepackage[colorlinks=true,citecolor=magenta,linkcolor=magenta]{hyperref}
\usepackage{amssymb,verbatim,amscd,amsmath,graphicx}
\usepackage{enumerate}
\usepackage{graphicx}
\usepackage{caption}
\usepackage{cleveref}
\usepackage{subcaption}
\usepackage{pdfsync}
\usepackage{color,colortbl}
\usepackage{fullpage}
\usepackage{bbm}
\usepackage{comment}
\usepackage{tikz}
\numberwithin{equation}{section}

\newtheorem{theorem}{Theorem}[section]
\newtheorem{lemma}[theorem]{Lemma}
\newtheorem{proposition}[theorem]{Proposition}
\newtheorem{corollary}[theorem]{Corollary}

\theoremstyle{definition}
\newtheorem{definition}[theorem]{Definition}

\newtheorem{def-prop}[theorem]{Definition-Proposition}

\newtheorem{remark}[theorem]{Remark}
\newtheorem{example}[theorem]{Example}

\DeclareMathOperator{\Ass}{Ass}

\DeclareMathOperator{\lcm}{lcm}

\DeclareMathOperator{\svd}{svd}

\newcommand{\PP}{{\mathbb P}}
\newcommand{\ZZ}{{\mathbb Z}}
\newcommand{\NN}{{\mathbb N}}

\newcommand{\kk}{{\mathbbm k}}

\def\I{{\mathcal I}}

\def\M{{\mathcal M}}

\def\R{{\mathcal R}}

\def\R{{\mathcal R}}

\def\pp{{\frak p}}

\def\1{{\bf 1}}
\def\0{{\bf 0}}

\title{Standard Veronese degree and star configurations}

\author{Th\'ai Th\`anh Nguy$\tilde{\text{\^E}}$n}
\address{University of Dayton, Department of Mathematics,
	300 College Park, Dayton, Ohio, USA \\
	and University of Education, Hue University, 34 Le Loi, Hue, Vietnam}
\email{tnguyen5@udayton.edu}

\begin{document}

\begin{abstract}
We show that the standard Veronese degree of the defining ideal $I_{n,h}$ of a monomial star configuration is $\lcm(1,2,\ldots,h)$, proving a conjecture by Grifo and Seceleanu. We also provide a counter-example for their question regarding upper bounds for the standard Veronese degree of monomial ideals.
\end{abstract}

\maketitle

\section{Introduction}

Over recent years, \textit{star configurations} in $\PP^n$ and their generalized versions have emerged as an interesting family of projective varieties that serve as examples and counterexamples for various problems in commutative algebra and algebraic geometry. Generally speaking, a star configuration (generalized star configuration) is a union of complete intersection linear spaces (complete intersection of hypersurfaces, respectively). These configurations and their defining ideals have been shown to exhibit many interesting extremal properties. For example, star configurations form important examples in relation to the ideal containment problem and resurgence numbers \cite{BoH, bghn2022demailly}. Many aspects of (generalized) star configurations have been studied extensively, including the minimal generators and minimal graded free resolutions of their ideals \cite{GHM2013,PS2015, Galetto20} and of the \textit{symbolic powers} of their ideals \cite{Mantero, LinShen}. Some of many other papers that have contributed to our understanding of star configurations include \cite{WaldschmidtSpecialConfig, CGVT2015, AGT2017}, as well as \cite{Toh2017}, which explores an interesting connection to subspace arrangements. 

Symbolic powers of an ideal is one of the central objects in commutative algebra and algebraic geometry. Let $R$ be a commutative ring and let $I \subseteq R$ be an ideal. For $n \in \NN$, the $n$-th symbolic power of $I$ is defined to be
 	$$I^{(n)} = \bigcap_{\pp \in \Ass(I)} \left(I^nR_\pp \cap R\right).$$

In this paper, we are interested in finding the \textit{standard Veronese degree} of the ideal of certain star configurations. In a recent survey \cite{symbolicReesSurvey} on symbolic Rees algebras, Grifo and Seceleanu defined the standard Veronese degree of an ideal $I$ to be 
$$\svd(I):= \inf \{ d \mid I^{(dk)}=(I^{(d)})^k \text{  for all  } k\ge 1 \}.$$

In other words, $\svd(I)$ is the least integer $c$ such that the $c$-th Veronese subalgebra of the symbolic Rees algebra of $I$ (see next section for the definition) is standard graded. This is a special instance of a more general problem: If $A=\bigoplus_{i=0}^\infty A_i$ is a finitely generated, graded $R$-algebra over a ring $R=A_0$, then what is the least integer $c\ge 1$, denoted $\svd(A)$, such that the $c$-th Veronese subalgebra $A^{(c)}=\bigoplus_{i=0}^\infty A_{ic}$ of $A$ is standard graded? Driven by a recent, active trend in studying nonstandard graded algebras, studying the standard Veronese degree of graded algebras provides a bridge back to the standard graded setting where extensive literature has already been established. Geometrically, finding $\svd(A)$ yields an effective bound which guarantees that $\text{Proj}(A^{(c)})$ admits a very ample embedding into a projective space. When $A$ is the Rees algebra of a graded family of ideals, understanding $\svd(A)$ provide useful tools for studying the asymptotic behavior and invariants of the given graded family.

If the maximum degree of minimal generators of $A$ is $\omega$, a general upper bound for $\svd(A)$ is known to be $\omega \cdot \omega!$, see, for instance, \cite{symbolicReesSurvey}. A general upper bound for $\svd(I)$ can be found in \cite{Rees58} which states that if the symbolic Rees algebra of $I$ is generated in degrees $a_1,a_2,\ldots,a_s$ then $\svd(I)\le s\lcm(a_1,\ldots ,a_s)$. These bounds are far from sharp in general. Specific upper bounds were studied including for Fermat point configurations in \cite{NagelSeceleanu} and for certain space monomial curve families in \cite{Mor91}. 

Our approach to finding the standard Veronese degree is to reduce it to a problem on minimal nonnegative solutions of linear Diophantine equations. This simple yet effective approach reveals connections between our problem and minimal solutions of linear Diophantine equations, zero-sum sequences, and the search for Hilbert bases of appropriate convex polyhedra \cite{HenkWeis, Sissokho21}. Our main result states that if $A$ is a graded algebra over $A_0$ generated in degrees at most $h$, then the $d$-th Veronese subalgebra $A^{(d)}$ is standard graded, where $d = \lcm(1,\dots,h)$. In particular, $\svd(A) \le \lcm(1,\dots,h)$, see \Cref{cor.SVDAlgebra}. Applying this to the ideal $I$ of a \textit{codimension $h$ monomial star configuration} (see next section for the definition), we obtain $\svd(I) = \lcm(1,\ldots ,h)$, see \Cref{thm.Star}. This proves a conjecture in \cite{symbolicReesSurvey}. In \cite[Question 3.10]{symbolicReesSurvey}, it was also asked if 
\[
\svd(I) \le \text{ the lcm of the degrees of any set of algebra generators for } \R_s(I)
\]
for any monomial ideal. We provide examples to answer this question negatively.

\par
\vspace{1em}
\noindent
{\bf Acknowledgments.}
The author thanks an anonymous referee for a careful read and constructive comments. After submitting this paper, the author learned in a personal communication with P. Mantero and V. Nguyen that they had independently proved the same results using a different method in their ongoing project. The author acknowledges support from the AMS-Simons Travel Grant.


\section{Preliminaries}

Let $R$ be a standard graded algebra over a field. A collection $\{I_n\}_{n \ge 0}$ of ideals in $R$ is called a \emph{graded family} of ideals if $I_p \cdot I_q \subseteq I_{p+q}$ for all $p, q \ge 0$. By convention, we will assume that $I_0 = R$. A graded family $\{I_n\}_{n \ge 0}$ is called a \emph{filtration} if, in addition, $I_p \supseteq I_{p+1}$ for all $p \ge 0$.
The \emph{Rees algebra} of a graded family $\I = \{I_n\}_{n \ge 0}$ of ideals in $R$ is defined by
		$$\R(\I) = \bigoplus_{n \ge 0} I_nt^n \subseteq R[t].$$
Note that $\R(\I)$ is a graded algebra. A graded family $\I$ of ideals in $R$ is said to be \emph{Noetherian} if its Rees algebra is a Noetherian ring. Important examples of graded families include the filtration of all ordinary powers $(I^n)_{n\ge 0}$, the integral closure of powers $(\overline{I^n})_{n\ge 0}$, and the symbolic powers $(I^{(n)})_{n\ge 0}$, of an ideal $I$. We shall use $\R_s(I)$ to denote the Rees algebra of the symbolic power filtration of $I$.

For a graded algebra $A=\bigoplus_{i=0}^\infty A_i$ over $A_0=R$ and an integer $c \ge 1$, the $c$-th Veronese subring  of $A$ is 
$A^{(c)}=\bigoplus_{i=0}^\infty A_{ic}$.
Note that we can view $A^{(c)}$ as a graded algebra over $R$, by setting $\deg(A_{ic})=i$.
It is well-known that $A$ is a finitely generated $R$-algebra if and only if there is an integer $c \ge 1$ such that $A^{(c)}$ is a standard graded $R$-algebra, see, for instance, \cite[Theorem 2.1]{HHT2007}. In particular, a graded family $\I$ of ideals in $R$ is Noetherian if and only if there exists $c \ge 1$ such that $I_{ck}=I_c^k$ for all $k\ge 1$. For a similar statement in terms of the integral closure of $I_n$'s, see \cite[Theorem 3.4]{TaiThaiNOBody}.\par

\vspace{0.5em}

Let $R = \kk[x_1, \dots, x_n]$ and let $1\le h < n$. The defining ideal of the \textit{codimension $h$ monomial star configuration} is given by
	$$I_{n,h} = \bigcap_{1 \le i_1 < \dots < i_h \le n} (x_{i_1}, \dots, x_{i_h}).$$

If, in the intersection above, we replace the variables $x_i$'s by homogeneous polynomials $f_i$'s, and assume that any collection of at most $h$ distinct elements $f_i$'s forms a complete intersection, we then get the defining ideal of a so-called generalized star configuration that has been studied extensively in, for instance, \cite{Mantero, LinShen, GHMN17}. In particular, it is known that if $I$ is the defining ideal of a generalized star configuration then $\R_s(I)$ is finitely generated, hence, there exists $c\ge 1$ such that $I^{(ck)}=(I^{(c)})^k$ for all $k$, thus, $\svd(I)$ is finite.\par

\vspace{0.5em}

Let $k_1,k_2,\ldots, k_s$ be integers and consider the linear Diophantine equation $k_1x_1+k_2x_2+\ldots +k_sx_s=0$. We say that a solution $(a_1,a_2,\ldots,a_s) \in \ZZ_{\ge 0}^s$ is a \textit{minimal nonnegative solution} of the equation if we cannot write it as a sum of two other nonnegative solutions 
\[
(a_1,a_2,\ldots,a_s) = (a_1',a_2',\ldots,a_s') + (a_1'',a_2'',\ldots,a_s''),
\]
where $0\le a_i', a_i'' \le a_i$ for all $i=1,2,\ldots,s$. For the rest of the paper, we will refer to minimal nonnegative solutions as \textit{minimal solutions}.

\begin{definition}
    \label{def.condM}
    Let $k_1,k_2,\ldots,k_s,d \in \ZZ_{>0}$. We say that the equation (in variables $x_1,\ldots,x_s, y$) $k_1x_1+k_2x_2+\ldots +k_sx_s = dy$ \textit{satisfies condition $\mathcal{M}$} if all minimal solutions have $y=1$.
\end{definition}

The following simple observation allows us to study the standard Veronese degree via linear Diophantine equations that satisfy condition $\mathcal{M}$.

\begin{proposition}
    \label{prop.SVDandMinSol}
    Let $A$ be a graded algebra over $A_0=R$, generated in degrees $a_1,\ldots, a_s >0$. Let $d$ be a positive integer such that the equation $a_1x_1+a_2x_2+\ldots +a_sx_s = d$ has nonnegative integral solutions. If the equation $a_1x_1+a_2x_2+\ldots +a_sx_s = dy$ satisfies condition $\mathcal{M}$, then the $d$-th Veronese subalgebra $A^{(d)}=\bigoplus_{i=0}^\infty A_{id}$ is standard graded.
\end{proposition}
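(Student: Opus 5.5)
We have to show that $A_{kd} = (A_d)^k$ for every $k \ge 1$. Here $(A_d)^k$ means the $R$-submodule of $A_{kd}$ spanned by products of $k$ elements of $A_d$. The inclusion $(A_d)^k \subseteq A_{kd}$ is immediate from the grading, so only the reverse inclusion needs work.

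First fix generators. Choose homogeneous algebra generators $g_1,\dots,g_s$ of $A$ over $R$ with $\deg g_j = a_j$. Every element of $A_{kd}$ is then an $R$-linear combination of monomials $g_1^{b_1}\cdots g_s^{b_s}$ satisfying
\[
a_1b_1+\dots+a_sb_s = kd .
\]
So it suffices to show that each such monomial lies in $(A_d)^k$.

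The key step is to read the exponent vector as a solution of the Diophantine equation. The tuple $(b_1,\dots,b_s,k)$ is a nonnegative solution of $a_1x_1+\dots+a_sx_s = dy$. Any nonnegative solution is a finite sum of minimal solutions. One sees this by induction on $\sum b_i + k$: if a solution is not minimal, it splits as a sum of two solutions, each nonzero and each with strictly smaller sum. By condition $\mathcal{M}$, every minimal solution has $y=1$. Hence
\[
(b_1,\dots,b_s,k) = \sum_{t=1}^{k} (c^{(t)}_1,\dots,c^{(t)}_s,1),
\]
with exactly $k$ summands, since the last coordinates add to $k$. For each $t$, the monomial $g_1^{c^{(t)}_1}\cdots g_s^{c^{(t)}_s}$ has degree $\sum_j a_jc^{(t)}_j = d$, so it lies in $A_d$. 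Our monomial is the product of these $k$ monomials, and therefore lies in $(A_d)^k$.

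Two small points remain.

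\begin{itemize}
\item \emph{The zero solution.} The solution with all $x_i=0$ and $y=0$ is trivial and is excluded from the decomposition. The decomposition into minimal solutions must use only nonzero solutions; otherwise "minimal" is vacuous, and the splitting step above must be read with both parts nonzero.
\item \emph{Solutions with $y=0$.} Since every $a_i>0$, a solution with $y=0$ has all $x_i=0$. So no nonzero minimal solution has $y=0$, and the $y=1$ bookkeeping above is consistent.
\end{itemize}

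The hypothesis that $a_1x_1+\dots+a_sx_s=d$ has a nonnegative solution guarantees $A_d\neq 0$ in general, which is consistent with the statement, but the inclusion argument does not otherwise need it.

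The main obstacle is only this care with the trivial solution. The rest is a direct translation from degrees of monomials to solutions of the equation.
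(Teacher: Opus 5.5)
Your proof is correct and is essentially the paper's own argument. Both write $A_{dk}$ as a sum of products indexed by nonnegative solutions $(n_1,\dots,n_s,k)$ of $a_1x_1+\dots+a_sx_s=dy$, then use condition $\mathcal{M}$ to split each such solution into exactly $k$ solutions with $y=1$, which gives $A_{dk}\subseteq (A_d)^k$.

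One small fix is needed. ``Generated in degrees $a_1,\dots,a_s$'' allows several generators of the same degree (for example $\kk[x,y]$, where $s=1$), so you cannot in general choose a single generator $g_j$ in each degree. Instead, work with the products $A_{a_1}^{n_1}\cdots A_{a_s}^{n_s}$ as the paper does, or let $b_j$ be the total exponent of all generators of degree $a_j$ and distribute those factors among the $k$ pieces. The rest of your argument goes through unchanged.
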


\begin{proof}
    Because $A=R[A_{a_1},A_{a_2},\ldots, A_{a_s}]$, for any $k\ge 2$, we have: 
$$A_{dk}=\sum_{a_1n_1+a_2n_2+\ldots + a_sn_s=dk}A_{a_1}^{n_1}A_{a_2}^{n_2}\ldots A_{a_s}^{n_s}.$$

Since $a_1x_1+a_2x_2+\ldots +a_sx_s = dy$ satisfies condition $\mathcal{M}$, $(n_1,n_2,\ldots,n_s)$ can be written as a sum of $k$ nonnegative vectors of the form $(n_1',n_2',\ldots ,n_s')$ where $a_1n_1'+a_2n_2'+\ldots + a_sn_s'=d$. This shows that 

\[
A_{dk} \subseteq \left( \sum_{a_1n_1' + \ldots + a_sn_s' = d} A_{a_1}^{n_1'}A_{a_2}^{n_2'}\cdots A_{a_s}^{n_s'}  \right)^k = (A_d)^k \subseteq A_{dk}.
\]

Therefore, $A_{dk}=(A_d)^k$ for all $k\ge 1$, or equivalently, $A^{(d)}$ is standard graded. 
\end{proof}

\begin{remark}
    \label{rem.SVDandMinSol}
    The condition $\mathcal{M}$ only gives a sufficient condition for Veronese subalgebras to be standard graded. Consider the graded algebra $A = \kk[t^2, t^3]$ generated in degrees $2$ and $3$. While $A^{(5)}=\kk[t^5]$ is standard graded, the equation $2x_1 + 3x_2 = 5y$ fails to satisfy condition $\mathcal{M}$. In fact, the solution $(x_1, x_2, y) = (5, 0, 2)$ is minimal. 
\end{remark}


\section{Standard Veronese Degree of Star Configurations}

In \cite[Example 6.7]{TaiThaiNOBody}, it is shown that $d_h:=\lcm(1,2,\ldots ,h)$ is the least integer $c$ such that $I_{n,h}^{(ck)}=\overline{(I_{n,h}^{(c)})^k}$ for every $k\ge 1$. In \cite[Example 3.9]{symbolicReesSurvey}, it is shown that if $I_{n,h}^{(ck)}=(I_{n,h}^{(c)})^k$ for every $k\ge 1$ then $d_h \mid c$, or equivalently, $d_h \mid \svd(I_{n,h})$. In this section, we will show that, in fact, $\svd(I_{n,h})=d_h$.\par
\vspace{0.5em}

First, we prove the following simple observation about minimal nonnegative solutions of linear Diophantine equations that will be used later.

\begin{lemma}
    \label{lem.xi<n}
    Let $k_1,k_2,\ldots,k_s \in \ZZ_{>0}$ and $d=\lcm(k_1,\ldots ,k_s)$. Suppose that $(a_1,a_2,\ldots ,a_s, b)$ with $b\ge 2$ is a minimal solution of the equation $k_1x_1+k_2x_2+\ldots +k_sx_s = dy$ (in variables $x_1,\ldots,x_s,y$). Then $a_i\le \dfrac{d}{k_i}-1$ for all $i=1,\ldots ,s$. 
\end{lemma}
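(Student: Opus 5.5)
We argue by contradiction, splitting off an explicit solution with $y=1$. Suppose $(a_1,\ldots,a_s,b)$ is a minimal solution with $b\ge 2$, and suppose some index $i$ has $a_i \ge \frac{d}{k_i}$. Since $d=\lcm(k_1,\ldots,k_s)$, the integer $m_i := d/k_i$ is a positive integer, and we have $k_i m_i = d$. The point is that the vector $e_i m_i$ (with $m_i$ in coordinate $i$, zeros elsewhere) together with $y=1$ is itself a nonnegative solution of $k_1x_1+\cdots+k_sx_s=dy$.

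The key step is to write
\[
(a_1,\ldots,a_s,b) = (0,\ldots,0,m_i,0,\ldots,0,1) + (a_1,\ldots,a_i-m_i,\ldots,a_s,b-1).
\]
We then check that both summands are nonnegative solutions. The first is a solution since $k_i m_i = d\cdot 1$. The second is nonnegative because $a_i\ge m_i$ and $b\ge 2$, so $b-1\ge 1$. It is a solution because it is the difference of two solutions: its left side equals $\sum_j k_ja_j - d = db - d = d(b-1)$.

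It remains to verify the coordinatewise bounds $0\le a'_j,a''_j\le a_j$ demanded by the definition of a minimal solution; these are immediate from nonnegativity of both pieces. Both summands are also nonzero, since the first has $y=1$ and the second has $y=b-1\ge1$. So we have decomposed the given solution as a sum of two other nonnegative solutions, contradicting minimality. Hence $a_i\le \frac{d}{k_i}-1$ for every $i$.

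There is no real obstacle here. The only point requiring care is that $d/k_i$ is an integer, which is exactly why $d$ is taken to be the lcm, and that $b\ge2$ guarantees the remainder has $y\ge1$, so that neither summand is the zero vector.
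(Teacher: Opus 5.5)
Your proposal is correct and uses the same decomposition as the paper: you split off the solution $(d/k_i)e_i$ with $y=1$ and observe that the remainder, with $y=b-1\ge 1$, is still a nonnegative solution. Your remark that $b\ge 2$ keeps the remainder nonzero is a small point the paper leaves implicit.
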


\begin{proof}
    Without loss of generality, suppose that $a_1 \ge \dfrac{d}{k_1}$. We can write
    \[
    (a_1,a_2,\ldots ,a_s, b) = (a_1-\dfrac{d}{k_1},a_2,\ldots ,a_s, b-1) + (\dfrac{d}{k_1},0,\ldots ,0, 1), 
    \]
    a sum of two nonnegative solutions. This shows that $(a_1,a_2,\ldots ,a_s, b)$ is not a minimal solution.
\end{proof}

Before proving the main result, we need the following numerical result.

\begin{lemma}
    \label{lem.combIneq}
    Let $d_n=\lcm(1,2,\ldots , n)$. Then $d_n >\dfrac{(n-2)(n+1)n}{4}$ for all $n\ge 1$.
\end{lemma}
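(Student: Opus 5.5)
The plan is to bound $d_n$ from below by the least common multiple of just the three largest integers $n,n-1,n-2$. Since the right-hand side $\frac{(n-2)(n+1)n}{4}$ is a cubic in $n$, three consecutive factors already give enough room. No deep estimates for $\lcm(1,\ldots,n)$, such as Nair's $2^n$ bound, are needed.

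First I dispose of the small cases.
\begin{itemize}
\item For $n=1$ the right-hand side equals $-\tfrac{1}{2}<1=d_1$.
\item For $n=2$ the right-hand side is $0<2=d_2$.
\item For $n=3$ the right-hand side is $\frac{1\cdot 4\cdot 3}{4}=3<6=d_3$.
\end{itemize}

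Now let $n\ge 4$. Since $n,n-1,n-2$ all lie in $\{1,\ldots,n\}$, we have $\lcm(n,n-1,n-2)\mid d_n$, so $d_n\ge \lcm(n,n-1,n-2)$. Consecutive integers are coprime, so $\gcd(n,n-1)=\gcd(n-1,n-2)=1$. Also $\gcd(n,n-2)$ divides $2$. Hence $n-1$ is coprime to $n(n-2)$, and
\[
\lcm(n,n-1,n-2)=(n-1)\cdot\lcm(n,n-2)=\frac{n(n-1)(n-2)}{\gcd(n,n-2)}\ge \frac{n(n-1)(n-2)}{2}.
\]

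It remains to check that $\frac{n(n-1)(n-2)}{2}>\frac{n(n+1)(n-2)}{4}$. Since $n(n-2)>0$ for $n\ge 4$, we may divide by it, and the inequality reduces to $2(n-1)>n+1$, that is, $n>3$. This holds for all $n\ge 4$, which completes the argument.

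There is no real obstacle here. The only points needing care are the gcd computation giving $\lcm(n,n-1,n-2)\ge n(n-1)(n-2)/2$, and the boundary case $n=3$, where the general inequality $2(n-1)>n+1$ becomes an equality and so must be checked directly.
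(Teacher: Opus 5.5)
Your proof is correct, and it takes a genuinely different route from the paper's.

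The paper checks $1\le n\le 10$ by hand. For $n\ge 11$ it uses Nagura's estimate $\psi(n)>0.916n-2.318$ for the Chebyshev function $\psi(n)=\ln d_n$ to get $d_n>2^n$. It then proves $2^n\ge \frac{(n-2)(n+1)n}{4}$ by induction.

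You instead bound $d_n$ below by $\lcm(n,n-1,n-2)\ge \frac{n(n-1)(n-2)}{2}$, using only that consecutive integers are coprime and that $\gcd(n,n-2)\mid 2$. This already exceeds $\frac{n(n+1)(n-2)}{4}$ once $2(n-1)>n+1$, i.e. for $n\ge 4$.

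The paper's route gives a much stronger, exponential lower bound, but that strength is never used. The lemma's only application, in Proposition~\ref{prop.minSol}, needs exactly $2d_n>(n-2)(1+2+\cdots+n)$, which is a cubic bound.

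Your argument is self-contained and needs only three small cases instead of ten. It avoids relying on an external analytic estimate with explicit numerical constants. It also shows why a cubic bound is the natural target: the leading coefficient $\frac12$ against $\frac14$ leaves a factor-two margin.
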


\begin{proof}
    One can directly check that $d_n >\dfrac{(n-2)(n+1)n}{4}$ for all $1\le n \le 10$. Suppose that $n\ge 11$. Let $\psi(n)= \ln(d_n)$ be the second Chebyshev function of $n$. By \cite[Lemma 2]{Nagura}, $\psi(n)>0.916n-2.318$ for all $n\ge 1$. Thus, for $n\ge 11$, we get $$\psi(n)>0.916n-2.318>n\ln 2.$$
    On the other hand, by induction on $n$, we have $2^n \ge \dfrac{(n-2)(n+1)n}{4}$ for all $n\ge 11$. This concludes the proof.
\end{proof}

We will also use the following combinatorial result in the proof of our main result.

\begin{lemma}
    \label{pigeonhole}
    Let $a_1,a_2,\ldots ,a_n$ be a sequence of integers. Then for any $1\le m \le n$, there exists a subsequence $a_{i_1},a_{i_2},\ldots ,a_{i_k}$ with $k\ge n-m+1$ elements such that $m \mid (a_{i_1}+a_{i_2}+\ldots +a_{i_k})$.
\end{lemma}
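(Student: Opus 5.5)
This is the classical Erdős–Ginzburg–Ziv style pigeonhole on prefix sums, applied after discarding $m-1$ terms. I will apply the pigeonhole principle to the partial sums of the first $m$ terms of the sequence and then enlarge the resulting block by all remaining terms outside it. The enlargement step needs a small correction, described below.

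Consider the $m+1$ prefix sums
\[
S_0=0,\quad S_j=a_1+\cdots+a_j \quad (1\le j\le m),
\]
where $m\le n$. By pigeonhole, two of them agree modulo $m$, say $S_p\equiv S_q \pmod m$ with $0\le p<q\le m$. Then the consecutive block $B=\{p+1,\ldots,q\}$ is nonempty and its sum is divisible by $m$. This block has length at most $m$, but we want a large subsequence, so it cannot be used directly.

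The natural idea is to take the complement instead. First apply the argument to the last $m$ terms (or to any $m$ chosen terms) to find a nonempty subset $B$ of them with $m\mid\sum_{i\in B}a_i$. Then remove $B$ and repeat on the remaining terms as long as at least $m$ remain. The disjoint union of all the removed blocks has sum divisible by $m$. The process stops only when fewer than $m$ terms remain, i.e.\ at most $m-1$ terms are left unused. Hence the union has at least $n-m+1$ elements, as required.

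So the plan is:
\begin{enumerate}[(i)]
\item Prove the basic fact: any $m$ integers contain a nonempty subsequence whose sum is divisible by $m$, via the prefix-sum pigeonhole above.
\item Iterate step (i) greedily, removing one such block each time, and take the union of all removed blocks.
\item Count the leftover terms: the process only halts when at most $m-1$ terms remain.
\end{enumerate}

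There is no real obstacle here. The only care needed is to ensure at least one extraction actually happens, so that the chosen subsequence is nonempty. This holds because $m\le n$, so step (i) applies at least once.
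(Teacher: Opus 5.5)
Your argument is correct and is essentially the paper's: the paper also uses prefix sums modulo $m$ to extract a consecutive block whose sum is divisible by $m$. It then removes that block and recurses on the remaining terms by induction on $n$, which unrolls to exactly your greedy iteration that stops once at most $m-1$ terms are left. Your iterative phrasing is slightly cleaner, since it avoids the paper's case split on the block length and the bookkeeping of an induction in which the remaining sequence can have any length between $m$ and $n$.
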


\begin{proof}
    We use induction on $n\ge m$. First, consider the base case where $n=m$.
    Form $m$ partial sums of the sequence modulo $m$: $s_\ell = \displaystyle \sum_{i=1}^\ell a_i $ (mod $m$) for $1\le \ell \le m$. If any of these partial sums is $0$ then we are done. Otherwise, if those $s_\ell$'s all belong to $\{1,2,\ldots ,m-1 \}$, then by Pigeonhole principle, there exists $s_{j_1}, s_{j_2}$ with $j_2>j_1$ so that $s_{j_1}= s_{j_2}$. It follows that $m$ divides $\displaystyle \sum_{i=j_1+1}^{j_2} a_i$. The claim holds for the base case since $j_2-j_1\ge 1 = m-m+1$. Suppose that the claim holds for any sequence of $n\ge m$ integers. Consider a sequence $a_1,a_2,\ldots ,a_{n+1}$ of $n+1$ integers. Form $n+1$ partial sums $s_\ell = \displaystyle \sum_{i=1}^\ell a_i $ (mod $m$) and using the same argument as above, there exists $j_2>j_1$ such that $m$ divides $\displaystyle \sum_{i=j_1+1}^{j_2} a_i$. 

    If $j_2-j_1 \ge n-m+1$ then take $k=j_2-j_1$ and we are done. Otherwise, if $j_2-j_1 \le n-m$, then $m \le n-(j_2-j_1)$. Removing $a_{j_1+1},\ldots,a_{j_2}$ from the sequence and considering the remaining sequence of exactly $n-(j_2-j_1)$ elements, we can apply the induction hypothesis to obtain a subsequence of $k'$ elements $a_{i_1},a_{i_2},\ldots ,a_{i_{k'}}$ where $k'\ge n-(j_2-j_1)-m+1$ such that $m \mid (a_{i_1}+a_{i_2}+\ldots +a_{i_{k'}})$. Thus, we get a subsequence $\{a_{i_1},a_{i_2},\ldots ,a_{i_{k'}}, a_{j_1+1},\ldots,a_{j_2}\}$ of at least $n-m+1$ elements whose sum is divisible by $m$ as desired.
\end{proof}

\begin{proposition}
    \label{prop.minSol}
    For each $n\ge 1$, the equation $x_1+2x_2+\ldots +nx_n=d_ny$ where $d_n=\lcm(1,2,\ldots , n)$ satisfies condition $\mathcal{M}$.
\end{proposition}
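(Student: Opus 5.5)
Suppose, for contradiction, that $(a_1,\ldots,a_n,b)$ is a minimal solution of $x_1+2x_2+\cdots+nx_n=d_ny$ with $b\ge 2$. By \Cref{lem.xi<n}, $a_i\le d_n/i-1$ for every $i$; this bound alone is too weak, so we combine it with \Cref{pigeonhole}. Encode the solution as a multiset $S$ of positive integers in $\{1,\ldots,n\}$, with $i$ appearing $a_i$ times. Its total is $bd_n$, and minimality means that no nonempty proper sub-multiset has sum divisible by $d_n$ (equivalently, sum equal to a positive multiple of $d_n$ strictly less than $bd_n$; in fact it suffices to show some sub-multiset has sum exactly $d_n$). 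The goal is to peel off a sub-multiset of sum exactly $d_n$.

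The plan is a greedy/descending construction along the divisor chain of $d_n$. First, I will bound the size of $S$. Since the sum is $bd_n\ge 2d_n$ and each element is at most $n$, we get $|S|\ge 2d_n/n$. By \Cref{lem.combIneq}, $d_n>\frac{(n-2)(n+1)n}{4}$, so $|S|>\frac{(n-2)(n+1)}{2}$, i.e.\ $|S|$ exceeds roughly $\sum_{m=2}^{n}(m-1)$. This is exactly the count needed to run \Cref{pigeonhole} repeatedly: for each $m=n,n-1,\ldots,2$ we will spend at most $m-1$ elements as "leftovers".

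The key step, which I expect to be the main obstacle, is organizing the iteration so that the pieces assemble to sum exactly $d_n$ rather than just a multiple of it. I would try the following. Working with a chain like $m_1=n$ and, more generally, using $\lcm$ structure, apply \Cref{pigeonhole} to $S$ with modulus $n$ to group all but at most $n-1$ elements into blocks whose sums are divisible by $n$ (iterating the lemma extracts disjoint blocks, each with sum a multiple of $n$, leaving fewer than $n$ elements). Replace each block by a "super-element" of value its sum; these new values are multiples of $n$. Then work modulo the next prime-power factor: treat the new items as integers divided by $n$ and group them modulo the appropriate factor so sums become multiples of $\lcm(n,n-1)$, losing at most $n-2$ more original elements, and so on down to multiples of $d_n$. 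The element counts from \Cref{lem.combIneq} guarantee that at each stage enough items remain. Ultimately we obtain blocks with sums divisible by $d_n$; using the bound $a_i\le d_n/i-1$ from \Cref{lem.xi<n} together with the discarded-element count, I would argue that at least two disjoint nonempty blocks arise, or one proper block, giving a proper sub-multiset with sum a positive multiple of $d_n$ less than $bd_n$. This contradicts minimality: the complement then has sum a positive multiple as well, so the solution splits as a sum of two nonzero nonnegative solutions.

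Finally, verify small cases $n\le$ a few by hand if the counting is not strict there (the inequality in \Cref{lem.combIneq} was checked directly for $n\le 10$, so the argument should be uniform). The delicate points are the precise accounting of how many elements are lost at each modular stage, and ensuring that the block produced is proper, i.e.\ that not all of $S$ is consumed in a single block; the leftover elements, or the existence of a second block, handle this.
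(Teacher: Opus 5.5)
Your reduction is right: a nonempty proper sub-multiset with sum divisible by $d_n$, in particular one with sum exactly $d_n$, contradicts minimality. The proof breaks in the counting that is supposed to drive the chain construction.

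After the first stage you are grouping \emph{super-elements}. So the stage with modulus $r$ discards up to $r-1$ super-elements, and each may contain many original elements. Nothing bounds the loss by $m-1$ original elements, so $|S|>\sum_{m=2}^{n}(m-1)$ does not suffice. The claim is in fact false. For $n=4$ the chain is ``mod $4$, then mod $3$''. Seven copies of $1$ (more than $1+2+3$ elements) produce one block $\{1,1,1,1\}$, and the lone super-element then cannot be grouped mod $3$. Indeed no sub-multiset has sum divisible by $d_4=12$ at all.

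More generally, a procedure that only sees residues and $|S|$ is a zero-sum argument in $\ZZ/d_n\ZZ$. That needs about $d_n$ elements (the Davenport constant). The constraints you use (total $bd_n$ and the caps $a_j\le d_n/j-1$) allow $|S|$ to be as small as roughly $2d_n/n$, for instance with $a_n,a_{n-1}$ near their caps. Your closing appeal to \Cref{lem.xi<n} to get a second or proper block does not address this, because the obstacle is producing any block at all.

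The missing idea is to use the actual sizes of the integers through a ``filler'' value, so that only one modulus is ever needed. This is what the paper does.
\begin{itemize}
\item Since $2d_n>(n-2)(1+2+\cdots+n)$ by \Cref{lem.combIneq}, some value $i$ occurs $a_i\ge n-1$ times.
\item By \Cref{lem.xi<n}, $ia_i\le d_n-i$, so the other elements sum to at least $d_n+i$.
\item Deleting them one at a time gives a sub-multiset of the non-$i$ elements with sum in $[d_n+i-n,\,d_n+i)$.
\item This sub-multiset has at least $i$ elements; otherwise $d_n\le i(n-1)\le ia_i\le d_n-i$.
\item So \Cref{pigeonhole} with modulus $i$ discards at most $i-1$ of them and leaves a sum divisible by $i$ in $[d_n-i(n-1),\,d_n]$.
\item Adding $(d_n-\text{sum})/i\le n-1\le a_i$ copies of $i$ gives a sub-multiset with sum exactly $d_n$.
\end{itemize}
This splits off a solution with $y=1$, contradicting minimality since $b\ge 2$. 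Divisibility by $d_n$ comes for free from $i\mid d_n$, with no chain of moduli.
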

\begin{proof}
    The case where $n=1$ is trivial. Assume that $n\ge 2$. Suppose that $(a_1,a_2,\ldots ,a_n,b)$ is a minimal solution of the equation $x_1+2x_2+\ldots +nx_n=d_ny$ with $b\ge 2$. By \Cref{lem.combIneq}, $d_nb \ge 2d_n > (n-2)(1+2+\ldots +n)$, thus, there exists $i$ such that $a_i\ge n-1$. Moreover, by \Cref{lem.xi<n}, we have $a_i \le \dfrac{d_n}{i}-1$. It follows that 
    \[
    a_1+2a_2 +\ldots + \widehat{a_i} + \ldots +na_n \ge 2d_n - ia_i \ge d_n +i,
    \]
    where $\widehat{a_i}$ indicates the absence of $a_i$ in the sum. Let $S = \sum_{j \ne i} j a_j \ge d_n + i$. We construct a finite, strictly decreasing sequence of integers $S_0, S_1, \ldots, S_N$ as follows. Set $S_0 = S$. To obtain $S_{k+1}$ from $S_k$, we choose any index $j \in \{1, \ldots, n\} \setminus \{i\}$ for which the current coefficient of $j$ is strictly positive, and we reduce that coefficient by $1$. This process terminates in $N = \sum_{j \ne i} a_j$ steps when all coefficients reach zero, yielding $S_N = 0$. Since the sequence $S_0, S_1, \ldots, S_N$ must cross the value $d_n + i$ exactly once, along this specific sequence of reductions, there is a unique index $m \ge 1$ where $S_m < d_n + i$ while $S_{m-1} \ge d_n + i$.

    Let $a_1', \dots, \widehat{a_i'}, \dots, a_n'$ be the specific coefficients that correspond to the sum $S_m$. By construction, $0 \le a_j' \le a_j$ for all $j \ne i$, and moreover $S_{m-1} - S_m \le n$. Thus, we obtain
    \[
    d_n+i-n \le a_1'+2a_2'+\ldots + na_n' < d_n + i.
    \]

    Note that there is no $a_i'$ in the above sum. Let $a=a_1'+a_2'+\ldots +a_n'$. If $a\le i-1$, then
    \[
    d_n+i-n \le a_1'+2a_2'+\ldots + na_n'\le n(a_1'+a_2'+\ldots +a_n')\le n(i-1).
    \]
    But this implies that
    \[
    d_n \le i(n-1) \le ia_i \le i\left(\dfrac{d_n}{i}-1 \right) = d_n-i,
    \]
    a contradiction. Therefore, $a\ge i$. Consider the following sequence of a numbers: the number $1$ appearing $a_1'$ times, the number $2$ appearing $a_2'$ times, $\ldots$, the number $n$ appearing $a_n'$ times, and the number $i$ not appearing at all. Since $a\ge i$, by \Cref{pigeonhole}, there exists a subsequence of $a_1''$ numbers $1$, $a_2''$ numbers $2$, $\ldots$, and $a_n''$ numbers $n$ with $0\le a_j''\le a_j'$ for all $1\le j \le n, j\ne i$, $a_1''+a_2''+\ldots+ a_n'' \ge a-i+1$ such that $i \mid (a_1''+2a_2''+ \ldots+ na_n'')$. Furthermore, since the total numbers that are not in the subsequence is $a-(a_1''+a_2''+ \ldots+ a_n'') \le i-1$, it follows that
    \[
    a_1''+2a_2''+ \ldots+ na_n'' \ge d_n+i-n - (i-1)n =  d_n-i(n-1). 
    \]
    Note that we also have $a_1''+2a_2''+ \ldots+ na_n'' < d_n+i$. Let $a_i''=\dfrac{d_n-(a_1''+2a_2''+ \ldots+ na_n'')}{i}$, then $a_i'' \in \ZZ$ because $d_n$ is the $\lcm(1,2,\ldots,n)$, and $0 \le a_i'' \le n-1$. Thus, we can write
    $$(a_1,\ldots ,a_n,b) = (a_1'',\ldots, a_i'', \ldots,a_n'',1)+(a_1-a_1'',\ldots, a_i-a_i'', \ldots, a_n-a_n'',b-1),$$
    where each of the right hand side is a nonnegative solution to the equation $x_1+2x_2+\ldots +nx_n=d_ny$. This contradicts the assumption that $(a_1,a_2,\ldots ,a_n,b)$ is a minimal solution, and hence, finishes the proof.    
\end{proof}

We deduce an immediate consequence of the above result.

\begin{corollary}
    \label{cor.SVDAlgebra}
    Let $A$ be a finitely generated, graded algebra over $A_0$ and $h$ be the maximum degree of a minimal algebra generator of $A$. Let $d_h= \lcm(1,\ldots,h)$. Then, the $d_h$-th Veronese subalgebra $A^{(d_h)}$ is standard graded. In particular, $\svd(A) \leq \lcm(1,\ldots,h)$.
\end{corollary}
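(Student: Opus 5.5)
The plan is to reduce to \Cref{prop.SVDandMinSol} and \Cref{prop.minSol}. Since $A$ is finitely generated over $A_0=R$, we may choose finitely many homogeneous minimal algebra generators. If $h$ is the maximum of their degrees, then all generator degrees lie in $\{1,\ldots,h\}$; degree-$0$ elements are already in $R$ and can be discarded. It is harmless to enlarge the list of generating degrees to all of $1,2,\ldots,h$: we write $A=R[A_1,A_2,\ldots,A_h]$, which holds because each $A_j$ with $j\le h$ is contained in $A$, and every generator lies in one of these pieces. So we view $A$ as generated in degrees $a_j=j$ for $j=1,\ldots,h$.

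Next we check the hypotheses of \Cref{prop.SVDandMinSol} with $d=d_h$. First, the equation $x_1+2x_2+\cdots+hx_h=d_h$ has a nonnegative integral solution, e.g.\ $x_1=d_h$ and all other $x_j=0$. Second, by \Cref{prop.minSol} applied with $n=h$, the equation $x_1+2x_2+\cdots+hx_h=d_hy$ satisfies condition $\mathcal{M}$. Then \Cref{prop.SVDandMinSol} gives that $A^{(d_h)}$ is standard graded, i.e.\ $A_{d_hk}=(A_{d_h})^k$ for all $k\ge1$.

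One point needs care. \Cref{prop.SVDandMinSol} expands $A_{dk}$ as a sum over all exponent vectors of products $A_{a_1}^{n_1}\cdots A_{a_s}^{n_s}$. This expansion is valid for any generating set of graded pieces, including the redundant list of degrees $1,\ldots,h$. So adding the degrees that carry no actual generators causes no problem; those $A_j$ are simply already in the algebra generated by lower degrees.

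Finally, by the definition of $\svd(A)$ as the least $c$ with $A^{(c)}$ standard graded, we get $\svd(A)\le d_h=\lcm(1,\ldots,h)$. There is no real obstacle here, since the content is in \Cref{prop.minSol}. The only subtlety is the bookkeeping of enlarging the generating degrees to the full range $1,\ldots,h$, which is needed because \Cref{prop.minSol} is stated for all coefficients $1,\ldots,n$ simultaneously.
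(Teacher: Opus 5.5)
Your proposal is correct and matches the paper's proof: view $A$ as generated in degrees $1,\ldots,h$, note that $x_1+2x_2+\cdots+hx_h=d_h$ has a nonnegative solution, and combine \Cref{prop.minSol} with \Cref{prop.SVDandMinSol}. Your remark that adding generating degrees with no actual generators is harmless is a correct, slightly more explicit version of the paper's phrase ``$A$ can be generated in degrees $1,2,\ldots,h$''.
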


\begin{proof}
    It is clear that the equation $x_1+2x_2+\ldots +hx_h=d_hy$ has nonnegative integral solutions. Since $A$ can be generated in degrees $1, 2, \ldots ,h$, by \Cref{prop.minSol} and \Cref{prop.SVDandMinSol}, $A^{(d_h)}$ is standard graded.
\end{proof}

We are now ready to prove the main result of this paper.

\begin{theorem}
    \label{thm.Star}
    For the ideal $I_{n,h}$ of the codimension $h$ monomial star configuration, we have $$\svd(I_{n,h})=\lcm(1,2,\ldots ,h).$$
\end{theorem}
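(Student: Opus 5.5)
The proof has two inequalities. The lower bound $d_h \mid \svd(I_{n,h})$ is already known from \cite[Example 3.9]{symbolicReesSurvey}, and it gives $\svd(I_{n,h}) \ge d_h$. So the work is the upper bound $\svd(I_{n,h}) \le d_h$. We get it by applying \Cref{cor.SVDAlgebra} to $A=\R_s(I_{n,h})$, viewed as a graded algebra over $A_0=R$. For this we need a structural fact: $\R_s(I_{n,h})$ is generated as an $R$-algebra in $t$-degrees at most $h$. Equivalently, for every $m\ge 1$,
\[
I_{n,h}^{(m)}=\sum_{\substack{m_1+\cdots+m_r=m\\ 1\le m_j\le h}} I_{n,h}^{(m_1)}\cdots I_{n,h}^{(m_r)}.
\]

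To prove this, we use that $I_{n,h}$ is a squarefree monomial ideal. Its symbolic powers are $I^{(m)}=\bigcap_{|F|=h}(x_i : i\in F)^m$, so a monomial $x^{\mathbf a}$ lies in $I^{(m)}$ if and only if $\sum_{i\in F}a_i\ge m$ for every $h$-subset $F$. Equivalently, after sorting the exponents as $a_{(1)}\le\cdots\le a_{(n)}$, the condition is that the sum of the $h$ smallest exponents is at least $m$. This is also the known generation result for symbolic powers of star configurations (Mantero, Lin–Shen, cited in the paper): $\R_s(I_{n,h})$ is generated by the monomials $x_{i_1}\cdots x_{i_{n-h+j}}\,t^{j}$ for $1\le j\le h$.

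If we verify the generation directly, the argument is by induction on $m$. Take a monomial $x^{\mathbf a}$ with $h$-smallest-sum at least $m>h$. Let $x^{\mathbf e}$ be the product of the variables in the support of $\mathbf a$. If the support has size $n-h+j$, then $x^{\mathbf e}\in I^{(j)}$ with $j\ge 1$. Peeling off $x^{\mathbf e}$ reduces every nonzero exponent by one, and we check that the $h$-smallest-sum of $\mathbf a-\mathbf e$ is at least $m-j$. We then split off a generator of degree $\min(j,h)$ and continue. Handling the case where the support is all of $[n]$, so that $j=h$, gives the bound $\le h$ on the degrees.

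The main obstacle is exactly this bookkeeping of the exponent inequalities. Since the result is in the literature, we would cite the generation theorem rather than reprove it. Once generation in degrees $\le h$ is established, \Cref{cor.SVDAlgebra} gives that $\R_s(I_{n,h})^{(d_h)}$ is standard graded, that is, $I_{n,h}^{(d_hk)}=(I_{n,h}^{(d_h)})^k$ for all $k$. Hence $\svd(I_{n,h})\le d_h$, and combined with the lower bound this yields equality.
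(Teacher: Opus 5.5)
Your proposal is correct and follows the paper's proof. The lower bound is taken from \cite[Example 3.9]{symbolicReesSurvey}, and the upper bound comes from \Cref{cor.SVDAlgebra} once $\R_s(I_{n,h})$ is known to be generated in degrees $1,\ldots,h$. The paper simply cites \cite[Proposition 4.6]{HHT2007} or \cite[Example 3.9]{symbolicReesSurvey} for this generation fact. Your direct induction, peeling off the support monomial $x^{\mathbf e}\in I^{(j)}$ with $1\le j\le h$ and using the $h$-smallest-exponent-sum criterion, is also sound.
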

\begin{proof}
    Denote $I=I_{n,h}$, then by \cite[Proposition 4.6]{HHT2007} or \cite[Example 3.9]{symbolicReesSurvey}, we have
    $$\R_s(I) = R[x_{i_1}x_{i_2}\ldots x_{i_{n-h+m}}t^m \mid 1\le m \le h, 1\le i_1 < i_2 < \ldots < i_{n-h+m} \le n].$$
    In particular, $\R_s(I)$ is generated in degrees $1,2,\ldots ,h$. By \Cref{cor.SVDAlgebra}, the $d_h$-th Veronese subalgebra $\R_s(I)^{(d_h)}$ is standard graded. On the other hand, if $c$ is an integer such that $\R_s(I)^{(c)}$ is standard graded, it is shown in \cite[Example 3.9]{symbolicReesSurvey} that $c$ is divisible by $d_h$, hence, $c\ge d_h$ (see also \cite[Example 6.7]{TaiThaiNOBody} for another proof that $c\ge d_h$). Therefore $\svd(I_{n,h})=\lcm(1,2,\ldots ,h).$
\end{proof}

\begin{corollary}
    \label{cor.genStar} For the ideal $I$ of a codimension $h$ generalized star configuration defined by forms of the same degree, we have $\svd(I)=\lcm(1,2,\ldots ,h)$. 
\end{corollary}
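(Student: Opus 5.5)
The plan is to reduce to the monomial case, where \Cref{thm.Star} already gives the answer, by means of a flat ring extension. Let $I$ be the ideal of a codimension $h$ generalized star configuration defined by forms $f_1,\ldots,f_n$ of a common degree $e$ in a polynomial ring $S$. Set $T=\kk[x_1,\ldots,x_n]$ and let $\varphi\colon T\to S$ be the $\kk$-algebra map with $x_i\mapsto f_i$. Then $I=\varphi(I_{n,h})S$. The key input I would take from the literature on generalized star configurations (e.g.\ \cite{GHMN17, Mantero, LinShen}) is that symbolic powers commute with this substitution:
\[
I^{(m)}=\varphi(I_{n,h}^{(m)})S \quad\text{for all } m\ge 1 .
\]
This rests on the hypothesis that any $h$ (hence any $\le h+1$ suitably, or all relevant) subsets of the $f_i$ form regular sequences, which makes $\varphi$ behave flatly on the relevant ideals. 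Under this identity the symbolic Rees algebra of $I$ is the image of $\R_s(I_{n,h})$. Its generators are $f_{i_1}\cdots f_{i_{n-h+m}}t^m$ for $1\le m\le h$, so it is generated in degrees $1,\ldots,h$. By \Cref{cor.SVDAlgebra}, $\svd(I)\le d_h$.

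For the lower bound, suppose $I^{(ck)}=(I^{(c)})^k$ for all $k$. I want to conclude that $d_h\mid c$. Fix $j\le h$ with $j\nmid c$. The approach is to transport the monomial obstruction of \cite[Example 3.9]{symbolicReesSurvey}. There one finds a monomial of $I_{n,h}^{(ck)}$, built from powers of generators of degree $j$, that is not in $(I_{n,h}^{(c)})^k$, and the detection is by $t$-degree versus polynomial degree. Here the equal-degree hypothesis is essential. Because every $f_i$ has degree $e$, the map $\varphi$ multiplies the standard degree by $e$, so initial degrees scale uniformly:
\[
\alpha(I^{(m)})=e\,\alpha(I_{n,h}^{(m)}).
\]
The same holds for products. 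So it suffices to find a degree-based obstruction in the monomial case, say $\alpha(I_{n,h}^{(ck)})<k\,\alpha(I_{n,h}^{(c)})$ for suitable $k$.

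Concretely, one has $\alpha(I_{n,h}^{(m)})=\min\sum_r q_r(n-h+r)$ over decompositions $\sum_r r q_r=m$. The asymptotic slope is $(n-h+h)/h=n/h$, and it is attained exactly when $h\mid m$. More generally, the optimum exploits divisibility by the generator degrees. If $h\nmid c$, then $\alpha(I^{(c)})/c>n/h$, while $\alpha(I^{(ck)})/(ck)\to n/h$ along $k$ with $h\mid ck$. This forces $h\mid c$. For smaller $j$ the initial degree alone may not suffice, and this is the main obstacle.

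For that step I would instead use injectivity. Choosing the $f_i$-monomial witness $\varphi(x^{\mathbf a})$ and arguing with the $\mathbb{Z}^n$-grading is available only if $f_1,\ldots,f_n$ were algebraically independent, which fails in general. So first I would try reducing to the case $n\le \dim S$, or localizing at a prime of codimension $j$ containing $n-h+j$ of the $f_i$. In such a localization the ideal becomes a star configuration of lower codimension $j$ in a regular sequence, which is genuinely monomial after completion. Symbolic powers localize, so the identity $I^{(ck)}=(I^{(c)})^k$ would descend to the codimension-$j$ monomial star configuration. Then the $h\mid c$ argument above, applied with $j$ in place of $h$, gives $j\mid c$ for all $j\le h$, hence $d_h\mid c$ and $\svd(I)=d_h$.
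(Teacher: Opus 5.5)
Your upper bound is the paper's argument: given $I^{(m)}=\varphi(I_{n,h}^{(m)})S$, $\R_s(I)$ is generated in $t$-degrees $1,\ldots,h$, and \Cref{cor.SVDAlgebra} gives $\svd(I)\le d_h$. Your initial-degree computation is also correct and proves $h\mid c$. Indeed $\alpha(I_{n,h}^{(m)})=m+(n-h)\lceil m/h\rceil$, so $\alpha(I^{(ch)})=ecn<h\,\alpha(I^{(c)})$ whenever $h\nmid c$.

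The gap is the passage to $j<h$. Localization can never lower the codimension of a star configuration. $I$ is unmixed of height $h$, and $\Ass(I_P)=\{QS_P : Q\in\Ass(I),\ Q\subseteq P\}$. So $I_P$ is either the unit ideal or again has all associated primes of height $h$. In fact the prime you want does not exist. Any $j+1\le h$ of the $f_i$ generate an ideal of height $j+1$, so a prime of height $j<h$ contains at most $j$ of the $f_i$, not $n-h+j>j$ of them. The alternative reduction to $n\le\dim S$ is not available either (take $n>3$ lines in $\PP^2$), which is exactly why flatness of $\varphi$ cannot be used directly. So as written you only get $h\mid\svd(I)$, which does not force $\svd(I)=d_h$ once $h\ge 3$. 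The paper gets the lower bound by appealing to \cite[Theorem 3.6]{GHMN17} to pass back to the monomial ideal $I_{n,h}$, and then using \Cref{thm.Star}.

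Your degree idea does go through if you replace localization by a quotient. Fix $1\le j\le h$, and let $B=\{1,\ldots,n-h+j\}$, $C=\{n-h+j+1,\ldots,n\}$, and $\bar S=S/(f_i : i\in C)$. In $\bar S$, the image under $\varphi$ of any monomial involving some $x_i$ with $i\in C$ vanishes. Moreover, a monomial in the variables $x_i$, $i\in B$, lies in $I_{n,h}^{(m)}$ if and only if it lies in the codimension $j$ star configuration ideal $I_{n-h+j,j}^{(m)}$ on those variables. Hence $I^{(cj)}\bar S$ contains $g=\prod_{i\in B}\bar f_i^{\,c}$, of degree $ec(n-h+j)$. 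On the other hand, $(I^{(c)})^j\bar S$ is generated in degrees at least $ej\bigl(c+(n-h)\lceil c/j\rceil\bigr)$, which is strictly larger when $j\nmid c$. Any $h-j+1\le h$ of the $f_i$ form a regular sequence, so each $\bar f_i$ with $i\in B$ is a nonzerodivisor on $\bar S$, and hence $g\neq 0$. Thus $I^{(cj)}=(I^{(c)})^j$ forces $j\mid c$. This gives $d_h\mid c$ using only the substitution identity and the equal-degree hypothesis, an elementary alternative to the citation in the paper.
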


\begin{proof}
    This follows from \Cref{thm.Star} and \cite[Theorem 3.6]{GHMN17} noting that $\R_s(I)$ is still generated in degrees $1,2,\ldots,h$.
\end{proof}

It is natural to ask if the standard Veronese degree of a monomial ideal $I$ is bounded above by the least common multiple of the degrees of any set of algebra generators of $\R_s(I)$, \cite[Question 3.10]{symbolicReesSurvey}. We address this question in the next section.


\section{LCM of degrees of generators is not an upper bound for SVD}

In this section, we give a negative answer to \cite[Question 3.10]{symbolicReesSurvey}. Specifically, we provide an edge ideal $I$ such that 
\[
\svd(I) > \text{ the lcm of the degrees of a set of algebra generators for } \R_s(I).
\]

Notice first that for any ideal $I$, $\R_s(I)$ always has algebra generators of degree $1$. Furthermore, \cite[Proposition 5.5]{Thaithesis} shows that if $\R_s(I)$ is generated in degrees $1,a,b$ then $\svd(I)\le \lcm(a,b)$. Hence, an example for $\R_s(I)$ that satisfies the above inequality must be generated in at least $4$ different degrees. We shall provide an example where $\R_s(I)$ is generated in exactly $4$ degrees. \par

\vspace{0.5em}

Consider the edge ideal of an odd cycle of length $2n+1$:
$$I(C_{2n+1}) = (x_1x_2,x_2x_3,\ldots ,x_{2n}x_{2n+1},x_{2n+1}x_1) \subset R=\kk[x_1,\ldots,x_{2n+1}].$$ 
By \cite[Theorem 3.4]{GHOS20} we have $I^{(k)}=I^k$ for all $k \le n$ and $I^{(n+1)} \not = I^{n+1}$. 
Moreover, $\mathcal{R}_s(I)=R[It,x_1\ldots x_{2n+1}t^{n+1}]$. 
We obtain the following consequence of this result.

\begin{lemma}
    \label{lem.UniCyclic}
    Let $I=I(C_{2n+1})$. Then $\mathcal{R}_s(I)^{(d)}$ is standard graded if and only if $n+1$ divides $d$. In particular, $\svd(I)=n+1$.
\end{lemma}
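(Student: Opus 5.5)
Write $f = x_1x_2\cdots x_{2n+1}$, so that $\R_s(I)=R[It, ft^{n+1}]$ is generated in degrees $1$ and $n+1$. We use this description throughout.

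\emph{Sufficiency.} Suppose $n+1\mid d$. Since $d$ is a multiple of the largest generator degree, the equation $x_1+(n+1)x_2=dy$ satisfies condition $\mathcal{M}$ by \Cref{lem.xi<n}. Indeed, take a minimal solution $(a_1,a_2,b)$ with $b\ge 2$. Here $\lcm(1,n+1)=n+1$ need not equal $d$, but the argument of \Cref{lem.xi<n} works verbatim: it only uses that $d/k_i$ is an integer for each $k_i$. So it gives $a_1\le d-1$ and $a_2\le d/(n+1)-1$. Then
\[
a_1+(n+1)a_2\le d-1+d-(n+1)<2d\le bd,
\]
a contradiction. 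The equation also clearly has nonnegative solutions, so \Cref{prop.SVDandMinSol} shows that $\R_s(I)^{(d)}$ is standard graded.

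\emph{Necessity.} Suppose $\R_s(I)^{(d)}$ is standard graded but $n+1\nmid d$. Write $d=q(n+1)+r$ with $0<r<n+1$. We find a $k$ with $I^{(dk)}\neq (I^{(d)})^k$, using a degree comparison.

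\begin{enumerate}
\item Compute the initial degrees. We have $I^{(m)}=\sum_j f^jI^{m-(n+1)j}$. A term $f^jI^{m-(n+1)j}$ has degree $(2n+1)j+2(m-(n+1)j)=2m-j$. Hence $\alpha(I^{(m)})=2m-\lfloor m/(n+1)\rfloor$.
\item Since initial degree is additive on products, $\alpha\big((I^{(d)})^k\big)=k\alpha(I^{(d)})=2dk-kq$.
\item Take $k=n+1$. Then $\alpha(I^{(dk)})=2dk-d=2dk-q(n+1)-r$.
\item This is strictly smaller than $2dk-kq=2dk-q(n+1)$ because $r>0$. So $I^{(d(n+1))}\neq (I^{(d)})^{n+1}$, contradicting standard gradedness.
\end{enumerate}

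Combining both directions, $\svd(I)$ is the least positive multiple of $n+1$, namely $n+1$.

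The main point to check is step 1, that the initial degree formula is correct. It requires the description of $I^{(m)}$ coming from the generators of $\R_s(I)$ and the fact that the degree of each term is minimized by taking $j$ as large as possible. Both follow directly from $\R_s(I)=R[It,ft^{n+1}]$, which is \cite[Theorem 3.4]{GHOS20}.
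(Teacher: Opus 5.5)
Your proof is correct. For sufficiency it is essentially the paper's argument. You apply condition $\mathcal{M}$ to every multiple $d$ of $n+1$, correctly noting that the proof of \Cref{lem.xi<n} only uses $k_i\mid d$. The paper instead observes that $\R_s(I)^{(n+1)}$ is standard graded and that this passes to multiples.

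For necessity you take a different route. The paper fixes the element $\omega^d t^{d(n+1)}$ and writes a hypothetical factorization of it into $n+1$ factors of $t$-degree $d$, in terms of the generators $It$ and $\omega t^{n+1}$. It then uses the $t$-degree and $x$-degree counts to force every factor to be a pure power of $\omega t^{n+1}$, so $n+1\mid d$. You instead compute $\alpha(I^{(m)})=2m-\lfloor m/(n+1)\rfloor$ and use additivity of $\alpha$ on products to get $\alpha(I^{(d(n+1))})=\alpha\bigl((I^{(d)})^{n+1}\bigr)-r$.

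Both arguments detect the same obstruction, since $\omega^d$ is a minimal-degree element of $I^{(d(n+1))}$. Yours is shorter and needs no monomial-factorization bookkeeping. It also yields the initial degrees, and hence the Waldschmidt constant $(2n+1)/(n+1)$, as a by-product.

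The paper's element-wise approach is more robust. It is the method reused in \Cref{ex.3DisjOddCycles}, where initial degrees cannot see the failure at all. There $\alpha(I^{(30k)})=55k=k\,\alpha(I^{(30)})$ for every $k$ (attained by powers of $w_1$), while the offending element $f$ has degree $113>110$.
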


\begin{proof}
    Let $\omega =x_1\ldots x_{2n+1}$. By \cite[Theorem 3.4]{GHOS20}, we have $\mathcal{R}_s(I)=R[It, \omega t^{n+1}]$, thus, $\mathcal{R}_s(I)^{(n+1)}$ is standard graded. 
    Therefore, if $d$ is divisible by $n+1$ then $\mathcal{R}_s(I)^{(d)}$ is also standard graded. 
    Conversely, suppose that $\mathcal{R}_s(I)^{(d)}$ is standard graded for some $d>0$. Consider the element $\omega^dt^{d(n+1)} \in \mathcal{R}_s(I)_{d(n+1)}=\left( \mathcal{R}_s(I)_{d}\right)^{n+1}$. 
    Thus, $\omega^d = m_1 m_2 \cdots m_{n+1}$ where $m_it^d \in \mathcal{R}_s(I)_{d}$ for all $i$. 
    Since $\mathcal{R}_s(I)$ is generated by $\omega t^{n+1}$ and $It$, for each $i$, we can write $m_it^d = \omega^{c_i}t^{c_i(n+1)} u_iE_it^{q_i}$ where $u_i \in R$ is a monomial and $E_i$ is a product of exactly $q_i$ minimal generators of $I$. 
    Let $c=c_1+\cdots + c_{n+1}$, $q=q_1+\cdots + q_{n+1}$, and $U=u_1\ldots u_{n+1}$. 
    From the equality $\omega^dt^{d(n+1)} = \prod_{i=1}^{n+1} \omega^{c_i}t^{c_i(n+1)} u_iE_it^{q_i}$, comparing $t$-degree yields $d(n+1)=c(n+1)+q$. This implies that $d\ge c$.
    Moreover, comparing the total degree in the $x$ variables yields: 
    \[
    d(2n+1)=c(2n+1)+\deg(U)+2q \implies \deg(U)=c-d.
    \]
    Since $\deg(U) \ge 0$, we have $c\ge d$. 
    Therefore, $c=d$ and $q=0$. 
    It follows that $q_i=0$ and $u_i=E_i=1$ for all $i$. 
    In particular, $d=c_i(n+1)$, hence, $d$ is divisible by $n+1$.
\end{proof}

We deduce the following result regarding the edge ideal of a disjoint union of odd cycles.

\begin{lemma}
\label{lem.DisjointCycles}
Let $I = I(C_{2n_1+1}) + I(C_{2n_2+1}) + \dots + I(C_{2n_m+1})$ be the edge ideal of the disjoint union of $m$ odd cycles. Let $L = \lcm(n_1+1, n_2+1, \dots, n_m+1)$. If $\mathcal{R}_s(I)^{(d)}$ is standard graded, then $d$ is a multiple of $L$.
\end{lemma}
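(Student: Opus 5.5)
The plan is to mimic the proof of \Cref{lem.UniCyclic} for each cycle separately, using the fact that the cycles live in disjoint sets of variables. Write $I_j = I(C_{2n_j+1})$ in the variables $X_j$ (pairwise disjoint), and let $\omega_j$ be the product of the $2n_j+1$ variables of $X_j$. I will first show that $\omega_j t^{n_j+1} \in \R_s(I)$ and, more importantly, pin down what a monomial of $\R_s(I)$ restricted to the variables $X_j$ can look like. Since $I$ is a squarefree monomial ideal, $I^{(k)}$ is the intersection of the $k$-th powers of its minimal primes. The minimal primes of $I$ are sums $P_1+\cdots+P_m$ with $P_j$ a minimal prime (minimal vertex cover) of $I_j$. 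Hence a monomial $u=u_1\cdots u_m$ (with $u_j$ in the variables $X_j$) lies in $I^{(k)}$ if and only if $\sum_j v_{P_j}(u_j)\ge k$ for every such choice. Equivalently, writing $\alpha_j(u_j)=\max\{s : u_j\in I_j^{(s)}\}$, we get $u\in I^{(k)}$ iff $\sum_j \alpha_j(u_j)\ge k$.

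Now fix $j$ and suppose $\R_s(I)^{(d)}$ is standard graded. I will consider the element $\omega_j^d t^{d(n_j+1)}$, which lies in $\R_s(I)_{d(n_j+1)} = (\R_s(I)_d)^{n_j+1}$. Since everything is monomial, this gives $\omega_j^d = m_1\cdots m_{n_j+1}$ with each $m_i\in I^{(d)}$ a monomial. Each $m_i$ divides $\omega_j^d$, so it involves only the variables $X_j$. By the criterion above, $m_i\in I^{(d)}$ then means $m_i\in I_j^{(d)}$, because $\alpha_\ell(1)=0$ for $\ell\ne j$. Therefore $\omega_j^d t^{d(n_j+1)} \in (\R_s(I_j)_d)^{n_j+1}$.

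At this point the argument of \Cref{lem.UniCyclic} applies verbatim, using $\R_s(I_j)=R_j[I_jt,\omega_jt^{n_j+1}]$ from \cite[Theorem 3.4]{GHOS20}. The comparison of $t$-degrees and $x$-degrees forces each factor to be a pure power $\omega_j^{c_i}t^{c_i(n_j+1)}$, and hence $n_j+1\mid d$. Strictly speaking, that argument only used the factorization of $\omega^d t^{d(n+1)}$ into $n+1$ elements of degree $d$, which is exactly what we now have. Since this holds for every $j$, we conclude $L\mid d$.

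The main obstacle is the reduction step, namely justifying that a degree-$d$ factor supported on $X_j$ lies in $\R_s(I_j)_d$. This rests on the description of symbolic powers of a sum of ideals in disjoint variables. I will handle it with the minimal-prime description above, which is elementary for squarefree monomial ideals. The other subtlety is that the factorization into $n_j+1$ factors may involve sums; since $\R_s(I)$ is $\mathbb{Z}^N$-graded by monomials, a monomial in a product of monomial ideals is a product of monomials, one from each ideal, so this is harmless.
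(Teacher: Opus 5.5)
Your proof is correct, but it isolates a single cycle by a different mechanism than the paper.

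The paper cites \cite[Theorem 3.4]{HNTT20}, $(I_1+I_2)^{(n)}=\sum_k I_1^{(k)}I_2^{(n-k)}$, to get $\R_s(I)\cong\R_s(I_1)\otimes_\kk\cdots\otimes_\kk\R_s(I_m)$. It then uses the graded surjection $\Phi_j\colon\R_s(I)\to\R_s(I_j)$ induced by killing the variables outside $V_j$. This map restricts to a surjection $\R_s(I)^{(d)}\to\R_s(I_j)^{(d)}$, and homomorphic images of standard graded algebras are standard graded. So $\R_s(I_j)^{(d)}$ is standard graded, and \Cref{lem.UniCyclic} is applied as a black box.

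You instead derive the membership criterion $u\in I^{(k)}\iff\sum_j\alpha_j(u_j)\ge k$ by hand, from $I^{(k)}=\bigcap_P P^k$ over minimal vertex covers. That criterion is exactly the squarefree monomial case of the binomial expansion. You then push only the test element $\omega_j^dt^{d(n_j+1)}$ through the proof of \Cref{lem.UniCyclic}, rather than citing its statement.

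What each route buys: yours is self-contained and elementary but tied to the monomial setting. The paper's is shorter and structural, and it reuses the same tensor decomposition in \Cref{ex.3DisjOddCycles} to list generators of $\R_s(I)$.

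Your criterion already gives the paper's intermediate conclusion. A monomial of $I_j^{(dk)}$ lies in $I^{(dk)}=(I^{(d)})^k$, and its $k$ monomial factors are supported on $X_j$, so they lie in $I_j^{(d)}$. Hence $\R_s(I_j)^{(d)}$ is standard graded, and you could cite \Cref{lem.UniCyclic} directly instead of re-entering its proof.
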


\begin{proof}
Let $G = \bigsqcup_{i=1}^m C_{2n_i+1}$ be the given graph, $V$ be its vertex set, and $V_j$ be the vertex set of the $j$-th cycle $C_{2n_j+1}$. Let $R = \kk[x_v \mid v \in V]$ and $R_j = \kk[x_v \mid v \in V_j]$ be their respective polynomial rings. Denote $I_j=I(C_{2n_j+1})$ for each $j=1,\ldots ,m$.

Assume that $\mathcal{R}_s(I)^{(d)}$ is standard graded. Fix an arbitrary index $j \in \{1, \dots, m\}$. We wish to show that $n_j+1$ divides $d$. 

By \cite[Theorem 3.4]{HNTT20}, for each $n$, we have $(I_1 + I_2)^{(n)} = \sum_{k=0}^n I_1^{(k)} I_2^{(n-k)}$. 
This implies that $\mathcal{R}_s(I_1 + I_2) \cong \mathcal{R}_s(I_1) \otimes_\kk \mathcal{R}_s(I_2)$, 
therefore, 
$$\mathcal{R}_s(I) \cong \mathcal{R}_s(I_1) \otimes_\kk \mathcal{R}_s(I_2) \otimes_\kk \cdots  \otimes_\kk \mathcal{R}_s(I_m).$$

In particular, writing $\omega_i = \prod_{v\in V_i} x_v$ for each $i$, we have
$$\mathcal{R}_s(I) = R \left[I_1 t, \ldots , I_m t, \omega_1 t^{n_1+1}, \ldots, \omega_m t^{n_m+1} \right].$$

Define an algebra homomorphism $\pi_j: R \to R_j$ by setting $\pi_j(x_v) = x_v$ if $v \in V_j$, and $\pi_j(x_v) = 0$ if $v \notin V_j$. This naturally induces a graded algebra homomorphism on the symbolic Rees algebras mapping $t \mapsto t$:
$\Phi_j: \mathcal{R}_s(I) \to \mathcal{R}_s(I_j)$.

Furthermore, applying $\Phi_j$ to the generators of $\mathcal{R}_s(I)$ shows that $\Phi_j$ is a surjective graded homomorphism that restricts to a surjective graded homomorphism on the Veronese subalgebras
$\Phi_j: \mathcal{R}_s(I)^{(d)} \twoheadrightarrow \mathcal{R}_s(I_j)^{(d)}.$

Since the homomorphic image of a standard graded algebra is standard graded, we have $\mathcal{R}_s(I_j)^{(d)}$ is also standard graded. By Lemma \ref{lem.UniCyclic}, $n_j+1$ divides $d$. Since this is true for all $j$, $d$ must be divisible by $L$.
\end{proof}

We are now ready to provide a negative answer to \cite[Question 3.10]{symbolicReesSurvey}.

\begin{example}
    \label{ex.3DisjOddCycles} 
    Let $I_1=I(C_{11}) \subset \kk[x_1,\ldots ,x_{11}]$, $I_2=I(C_{19}) \subset \kk[y_1,\ldots ,y_{19}]$, and $I_3=I(C_{29}) \subset \kk[z_1,\ldots ,z_{29}]$ be the respective edge ideals of the three odd cycles in polynomial rings over disjoint sets of variables. Let $I=I_1+I_2+I_3$ be the edge ideal of the disjoint union of the three odd cycles.
    As in the proof of \Cref{lem.DisjointCycles}, we have
    $$\mathcal{R}_s(I) = R \left[I_1 t, I_2 t,  I_3 t,  w_1 t^6,  w_2 t^{10},  w_3 t^{15} \right] = R \left[ I t,  w_1 t^6, w_2 t^{10}, w_3 t^{15} \right],$$
    where $w_1 = \prod_{i=1}^{11} x_i$, $w_2 = \prod_{j=1}^{19} y_j$, $w_3 = \prod_{k=1}^{29} z_k$, and $R$ is the polynomial ring on the $x,y,z$'s variables. 
    Consider the element $f=(x_1x_2t)(w_1t^6)^4(w_2t^{10})^2(w_3t^{15}) \in \mathcal{R}_s(I)_{60}$. We claim that $f \notin (\mathcal{R}_s(I)_{30})^2$. 
    
    We can check that $(1,4,2,1;2)$ is a minimal solution of the equation $x+6y+10z+15t=30u$. Hence, by looking at $t$-degree of $f$, it suffices to show that $f$ has a unique factorization into the algebra generators of $\mathcal{R}_s(I)$.
    Suppose $f$ factors into the algebra generators. 
    Because $\mathcal{R}_s(I)_0 = R$, this factorization may include a monomial $r \in R$ of $t$-degree 0.
    Let $a_x, a_y,$ and $a_z$ denote the number of generators of the form $x_ix_j t, y_iy_j t,$ and $z_iz_j t$, respectively, (where $x_ix_j \in I_1, y_iy_j \in I_2, z_iz_j \in I_3$), and $b, c,$ and $d$ be the exponents of $w_1 t^6, w_2 t^{10},$ and $w_3 t^{15}$, respectively, in this factorization of $f$. 

    Comparing the total degree of each variable set yields the following inequalities:
    \[
        2a_z + 29d \le 29 , 2a_y + 19c \le 38, 2a_x + 11b \le 46.
    \]
    Comparing the $t$-degree yields:
    $$ (a_z + 15d) + (a_y + 10c) + (a_x + 6b) = 60. $$
    This implies that 
    \[
    120 \le (29+d) + (38+c) + (46+b) = 113+b+c+d \implies b+c+d \ge 7.
    \]
    On the other hand, since $2a_z + 29d \le 29$, we have $d \le 1$. Similarly, $c \le 2$ and $b \le 4$. Therefore, $d=1, a_z=0$, $c=2, a_y=0$, $b=4, a_x=1$, and $r=1$. This shows $f$ has a unique factorization into the algebra generators of $\mathcal{R}_s(I)$, therefore $f \notin (\mathcal{R}_s(I)_{30})^2$.
    
    Thus, $\mathcal{R}_s(I)^{(30)}$ is not standard graded. By \Cref{lem.DisjointCycles}, $\svd(I)>30 = \lcm(1,6,10,15)$. Note that by \Cref{lem.xi<n}, any minimal solution $(x,y,z,t;u)$ must satisfy 
    \[
    30u \le \left(\frac{30}{1}-1 \right) +6\left(\frac{30}{6}-1 \right) + 10\left(\frac{30}{10}-1 \right) +15\left(\frac{30}{15}-1 \right) = 88 \implies u\le 2.
    \]
    This implies that there is no minimal solutions with $u\ge 3$, therefore, $\mathcal{R}_s(I)^{(60)}$ is standard graded, hence by \Cref{lem.DisjointCycles}, $\svd(I)=60=2\lcm(1,6,10,15)$. One can also check that $(1,4,2,1;2)$ is the only minimal solution with $u=2$. 
\end{example}

\begin{remark}
    \label{rem.3DisjOddCycles}
    More generally, we believe that one can construct many such examples by letting $a=pq, b=qr, c=rp$ where $p,q,r$ are three distinct primes, then examining when the equation $x+ay+bz+ct=\lcm(a,b,c)u$ does not satisfy condition $\M$ and finding minimal solutions with $u \ge 2$. 
    Examining the first $5$ prime numbers, computations show that besides \Cref{ex.3DisjOddCycles}, the following cases also provide examples such that the equation has minimal solutions with $u=2$. 
    Using a similar argument as in \Cref{ex.3DisjOddCycles}, in the following cases, one can show that the minimal solutions with $u=2$ give rise to examples such that 
    $\svd(I) > \text{ the lcm of the degrees of a set of algebra generators for } \R_s(I)$.
    \begin{itemize}
        \item $(p,q,r)=(2,3,11)$: the minimal solution $(1,9,2,1;2)$ gives rise to the example $I=I(C_{11})+I(C_{43})+I(C_{65})$. 
        \item $(p,q,r)=(2,7,11)$: the minimal solutions $(1,7,6,1;2)$ and $(3,10,4,1;2)$ both give rise to the example $I=I(C_{27})+I(C_{43})+I(C_{153})$. 
        \item $(p,q,r)=(3,5,7)$: the minimal solutions $(1,6,4,1;2)$ and $(2,5,3,2;2)$ both give rise to the example $I=I(C_{29})+I(C_{41})+I(C_{69})$. 
        \item $(p,q,r)=(3,5,11)$: the minimal solution $(1,8,3,2;2)$ gives rise to the example $I=I(C_{29})+I(C_{65})+I(C_{109})$. 
    \end{itemize}
    Further computations show that in all above cases, no solutions with $u\ge 3$ are minimal. Note that by \Cref{lem.xi<n}, any minimal solution of the equation $x+ay+bz+ct=du$ where $d=\lcm(a,b,c)$ must satisfy $du \le 4d-1-a-b-c$, hence, must have $u \le 3$. One can check that there are no minimal solutions with $u=3$ and all minimal solutions with $u=2$ are shown above. In particular, for the above examples, $\svd(I)=2\lcm(1,a,b,c)$ where $1,a,b,c$ are degrees of algebra generators of $\R_s(I)$.
\end{remark}

It would be interesting to know if Rees's bound $\svd(I)\le s\lcm(a_1,\ldots ,a_s)$ when the symbolic Rees algebra of $I$ is generated in degrees $a_1,a_2,\ldots,a_s$ could be improved. It remains open and is interesting to know if $\svd(I)$ is an upper bound for the maximum degree of algebra generators for $\R_s(I)$ as asked in \cite[Question 3.11]{symbolicReesSurvey}.

\bibliographystyle{alpha}
\bibliography{References}

\end{document}